\title{An Application of Abel's Method to the Inverse Radon Transform}
\abstract{A method of approximating the inverse Radon transform on the plane by integrating against a smooth kernel is investigated. For piecewise smooth integrable functions, convergence theorems are proven and Gibbs phenomena are ruled out. Geometric properties of the kernel and their implications for computer implementation are discussed. Suggestions are made for applications and an example is presented.}
\keywords{Radon Transform, Abel's Method, Approximation}
\begin{document}

\section{Introduction}
\thispagestyle{empty}

 For any function $f\in L(\mathbb{R}^2)$ we consider its Radon transform $Rf$, which is defined for any straight line $L\subset \mathbb{R}^2$ as
 \[
 Rf(L) = Rf(t,\psi) = \int\limits_{-\infty}^\infty f(t\cos\psi +
 u\sin\psi, -t\sin\psi + u\cos\psi)\, du,
 \tag{1.1}\] where $0\leq\psi<\pi$ and $-\infty<t<+\infty$.
 
 The angle $\psi$ is defined by the unit vector $\emph{\textbf{n}}
 =(\cos\psi,\sin\psi)$ which is orthogonal to the straight line
 $L$, and $t$ is the real number such that the vector
 $t\emph{\textbf{n}}$ connects the origin with the line $L$.
 
 By Fubini's theorem, the Radon transformation (1.1) exists
 for almost all $\psi$ in $[0,\pi)$ and $t\in\mathbb{R}$.
 
 There is a well-developed theory surrounding the computation of
 the inverse Radon transform which has found many fruitful
 applications (see monographs \cite{Herman}, \cite{Kabanikhin},
 \cite{Lavrent'ev} \cite{Natterer}, \cite{Pickalov}). One such
 family of numerical methods finds approximations of a function by
 expanding it into a series in terms of functions whose inverse
 Radon transforms are known (\cite{Herman}, Ch. 6). Other
 well-studied numerical methods include the methods of linograms
 and $\rho$-filtered layergrams, algebraic reconstruction
 techniques, and simultaneous iterative reconstruction techniques
 (\cite{Herman}). Many of these numerical methods are based on
 Tikhonov's principle of regularization (\cite{Tikhonov GSY},
 \cite{Tikhonov LY}). A particularly popular family of approaches
 for applications in tomography are filtered backprojection
 algorithms, which use convolution against suitable functions to
 approximate steps in the process of finding the inverse Radon
 transform (\cite{Herman}, Ch. 8). Some other methods make use of
 convolution with singular functions (see \cite{Lavrent'ev}).
 
 In this paper, we introduce an approximation of the inverse Radon transform that uses integration against a smooth kernel. This approach is direct and does not suffer from Gibbs phenomena. The motivation for this approach comes from Abel's method of summation
 for integrals, and our method of proof leverages the interplay between the Fourier Transform and the Radon Transform (see \cite{Radon}).
 
 For any $\alpha>0$ we introduce the integral operator
 \[
 A_\alpha f(x)\ =\ \int\limits_{0}^\pi d\psi,
 \int\limits_{-\infty}^\infty \Phi_\alpha(x,t,\psi)\, Rf(t,\psi)
 dt\ . \tag{1.2}\label{Abel-means}
 \] Here,
 \[
 \Phi_\alpha(x,t,\psi)\ =\ \frac{1}{2\pi^2}\, \frac{\alpha^2 -
 	(x_1\cos\psi + x_2\sin\psi - t)^2}{[\alpha^2 + (x_1\cos\psi +
 	x_2\sin\psi - t)^2]^2} \ . \tag{1.3}
 \]
 
 \par Our central results will concern piecewise smooth functions.
 From the perspective of applications, real-valued piecewise smooth
 functions are the most important --- they are the sorts of
 functions that arise naturally with density distributions in
 tomography. We say that a function $f$ is a simple piecewise
 smooth function if $f$ is of the form\[f(x) = F(x) \chi_D(x)\]
 where $F$ is differentiable, $D \subset \mathbb{R}^2$ has a
 piecewise smooth boundary that does not intersect itself, and
 \[\chi_D(x) = \begin{cases}1 & x \in D \cup \partial D, \\ 0 & x
 \not\in D \cup \partial D. \end{cases} \] A function will be said
 to be piecewise smooth if it is a finite linear combination of
 simple piecewise smooth functions.
 
 \par For a piecewise smooth function $f$ and real $r > 0$,
 define \[S_r f(x) = \frac{1}{2\pi} \int\limits_{0}^{2\pi} f(x_1 +
 r\cos\theta, x_2 + r \sin\theta)\, d \theta,\] and \[Sf(x) =
 \lim_{r \rightarrow 0} S_rf(x).\] The operator $Sf$ can be seen as
 taking the local average of $f$. It is not difficult to prove the
 existence of $Sf(x)$ for any piecewise smooth $f$ and point $x \in
 \mathbb{R}^2.$ At points where $f$ is continuous, it is clear that
 $Sf(x) = f(x)$. Furthermore, if $f$ is continuous on an open set
 $\Omega$, then on any compact subset $K$ of $\Omega$, $S_rf
 \rightarrow f$ uniformly on $K$.
 
 As an example, if $Q$ is the unit square \[ Q = \{(x_1, x_2) \in
 \mathbb{R}^2 : \max(|x_1|, |x_2|) \leq 1\},\] then $\chi_Q$ is
 piecewise smooth and \[S\chi_Q((x_1, x_2)) = \begin{cases}
 1, & \max(|x_1|, |x_2|) < 1, \\
 \frac{1}{2}, & \max(|x_1|, |x_2|) = 1 \text{ and } \min(|x_1, |x_2|) < 1, \\
 \frac{1}{4}, & |x_1| = |x_2| = 1, \\
 0, & \max(|x_1|, |x_2|) > 1.
 \end{cases} \] For a visual interpretation, see figure \ref{square-example}.
 
 \begin{figure}[t]
 	\begin{center}
 		\begin{tikzpicture}
 		\draw[fill= gray!35] (0,0) rectangle (4,4);
 		
 		\draw[fill=black] (4,0) circle (.07cm);
 		\draw (4, 0) circle (.3cm);
 		
 		\draw[fill=black] (2.4, 3.2) circle (.07cm);
 		\draw (2.4, 3.2) circle (.3cm);
 		
 		\draw[fill=black] (0, 1.8) circle (.07cm);
 		\draw (0, 1.8) circle (.3cm);
 		
 		\draw[fill=black] (4.5, 1.4) circle (.07cm);
 		\draw (4.5, 1.4) circle (.3cm);
 		\end{tikzpicture}
 		\caption{There are four cases for local averages over circles on the unit square.}\label{square-example}
 	\end{center}
 \end{figure}
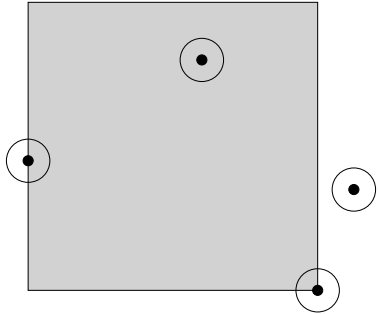
 
 In this paper, we will prove the following theorems.
 
 \medskip
 
 \begin{theorem} Let $f \in L(\mathbb{R}^2)$ be piecewise
 	smooth. Then for all $x \in \mathbb{R}^2$, \[\lim_{\alpha
 		\rightarrow 0} A_\alpha f(x) = Sf(x).\]
 \end{theorem}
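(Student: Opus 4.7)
The plan is to recast $A_\alpha$ as convolution of $f$ against a radial approximate identity on the plane, and then exploit that radial structure. An application of Fubini to (1.2) --- justified since $|\Phi_\alpha|\leq (2\pi^2\alpha^2)^{-1}$ and $\int_0^\pi\!\!\int |Rf|\,dt\,d\psi\leq\pi\|f\|_1$ --- followed by the substitution $y=tn(\psi)+un(\psi)^{\perp}$ (with $n(\psi)=(\cos\psi,\sin\psi)$) yields
\[
A_\alpha f(x) \;=\; \int_{\mathbb{R}^2} f(y)\,\Psi_\alpha(y-x)\,dy,\qquad \Psi_\alpha(z)\;=\;\frac{1}{2\pi^2}\int_0^\pi\frac{\alpha^2-(z\cdot n(\psi))^2}{\bigl[\alpha^2+(z\cdot n(\psi))^2\bigr]^2}\,d\psi.
\]

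The central step is to evaluate $\Psi_\alpha$ in closed form. Writing $z\cdot n(\psi)=|z|\cos\beta$ with $\beta=\psi-\arg z$, I would use $\tfrac{\alpha^2-b^2}{(\alpha^2+b^2)^2}=-\partial_\alpha\!\bigl(\tfrac{\alpha}{\alpha^2+b^2}\bigr)$ together with the standard integral $\int_0^\pi\frac{\alpha\,d\beta}{\alpha^2+|z|^2\cos^2\beta}=\frac{\pi}{\sqrt{\alpha^2+|z|^2}}$; differentiating in $\alpha$ then collapses the double integral to
\[
\Psi_\alpha(z)\;=\;\frac{\alpha}{2\pi(\alpha^2+|z|^2)^{3/2}}\;=:\;P_\alpha(z),
\]
the two-dimensional Poisson kernel of the upper half-space. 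Passing to polar coordinates at $x$ rewrites the convolution as a one-dimensional weighted average of spherical means,
\[
A_\alpha f(x)\;=\;\int_0^\infty w_\alpha(r)\,S_rf(x)\,dr,\qquad w_\alpha(r):=\frac{\alpha r}{(\alpha^2+r^2)^{3/2}},
\]
with total mass $\int_0^\infty w_\alpha(r)\,dr=1$.

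With this representation, a routine approximate-identity argument finishes the proof. Given $\epsilon>0$, the existence of $Sf(x)=\lim_{r\to 0}S_rf(x)$ for piecewise smooth $f$ lets me pick $\delta>0$ with $|S_rf(x)-Sf(x)|<\epsilon$ on $(0,\delta]$. On $[0,\delta]$ the weighted integral differs from $Sf(x)$ by at most $\epsilon+|Sf(x)|\cdot\alpha/\sqrt{\alpha^2+\delta^2}$, which shrinks as $\alpha\to 0$. On $[\delta,\infty)$ the bound $w_\alpha(r)\leq\alpha/r^2$ combined with the estimate $\int_\delta^\infty r^{-2}|S_rf(x)|\,dr\leq(2\pi\delta^3)^{-1}\|f\|_1$ --- obtained by unfolding the spherical mean back into a two-dimensional integral in $y$ --- produces a tail of size $O(\alpha\delta^{-3}\|f\|_1)$, which also vanishes with $\alpha$. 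Sending $\alpha\to 0$ and then $\epsilon\to 0$ gives the claim.

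The main obstacle I anticipate is the trigonometric kernel identity $\Psi_\alpha=P_\alpha$: this collapse is precisely what turns Abel smoothing applied one projection at a time into a genuine two-dimensional Poisson smoothing of $f$, and it is also the transparent link to Abel's method, since $P_\alpha$ has Fourier transform $e^{-\alpha|\xi|}$. Once that identification is secured, the remainder of the proof only uses the defining property of $Sf$ and the assumption $f\in L(\mathbb{R}^2)$.
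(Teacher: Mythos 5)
Your argument is correct, and it reaches the same pivotal representation as the paper --- namely $A_\alpha f = H_\alpha * f$ with $H_\alpha(z)=\frac{\alpha}{2\pi(\alpha^2+|z|^2)^{3/2}}$, followed by polar coordinates, spherical means $S_rf$, and an approximate-identity argument --- but you establish that representation by a genuinely different mechanism. The paper proceeds through Fourier analysis: it first identifies $A_\alpha f$ with the Fourier--Abel means $\frac{1}{(2\pi)^2}\int \widehat f(\xi)e^{ix\xi-\alpha|\xi|}d\xi$ via the projection-slice theorem and the Laplace-type identity $\int_0^\infty e^{-\alpha s}\cos(\beta s)\,s\,ds=\frac{\alpha^2-\beta^2}{(\alpha^2+\beta^2)^2}$, then computes $\widehat{H_\alpha}(\xi)=e^{-\alpha|\xi|}$ using Sonine's Bessel integral, and finally invokes Fourier inversion plus a continuity argument to upgrade an a.e.\ identity to a pointwise one. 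You instead apply Fubini directly to (1.2), unfold the Radon transform, and evaluate the angular average $\Psi_\alpha(z)=\frac{1}{2\pi^2}\int_0^\pi\frac{\alpha^2-(z\cdot n(\psi))^2}{[\alpha^2+(z\cdot n(\psi))^2]^2}d\psi$ in closed form by differentiating the elementary integral $\int_0^\pi\frac{\alpha\,d\beta}{\alpha^2+|z|^2\cos^2\beta}=\frac{\pi}{\sqrt{\alpha^2+|z|^2}}$ under the integral sign (a step you should justify explicitly, though it is routine). Your route is more elementary and self-contained --- no Bessel functions, no Fourier inversion theorem, no a.e.-to-everywhere patch --- and your explicit tail estimate $\int_\delta^\infty r^{-2}|S_rf(x)|\,dr\le(2\pi\delta^3)^{-1}\|f\|_1$ makes the $\delta$-shaped-kernel step quantitative where the paper argues qualitatively. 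What the paper's route buys in exchange is Proposition 2.2 itself, the identification of $A_\alpha$ with the classical Fourier--Abel means, which is a stated result of independent interest and the conceptual source of the method; in your treatment that connection appears only as a closing remark rather than as a proven byproduct.
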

 
 \begin{theorem}Let $f \in L(\mathbb{R}^2)$ be piecewise
 	smooth and $\alpha > 0$. If $|f(x)| < C$ for all $x \in
 	\mathbb{R}^2$, then for all $x \in \mathbb{R}^2$, \[|A_\alpha
 	f(x)| \leq C.\] Furthermore, if $f$ is real-valued and $A \leq
 	f(x) \leq B$ for all $x \in \mathbb{R}^2$, then for all $x \in
 	\mathbb{R}^2$, \[A \leq A_\alpha f(x) \leq B.\]
 \end{theorem}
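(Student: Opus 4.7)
The plan is to rewrite $A_\alpha f$ as an ordinary convolution of $f$ against a positive, radially symmetric probability kernel on $\mathbb{R}^2$. Once this is established, both conclusions become standard averaging statements. The boundedness of $f$ combined with the estimate $|\Phi_\alpha|\le 1/(2\pi^2\alpha^2)$ ensures that all iterated integrals are absolutely convergent, so Fubini's theorem lets us expand the definition of $Rf$ inside (1.2) and interchange orders of integration. For each fixed $\psi$ the map $(t,u)\mapsto y$ in (1.1) is an isometry of $\mathbb{R}^2$; changing variables with Jacobian $1$ gives
\[
A_\alpha f(x)\ =\ \int_{\mathbb{R}^2} f(y)\, K_\alpha(x-y)\, dy, \qquad
K_\alpha(z)\ =\ \int_0^\pi \Phi_\alpha\bigl(x,\, y\cdot\mathbf{n}(\psi),\, \psi\bigr)\, d\psi .
\]
Writing $z = x-y = \rho(\cos\phi,\sin\phi)$, the quantity $((x-y)\cdot\mathbf{n}(\psi))^2$ in the integrand becomes $\rho^2\cos^2(\psi-\phi)$, and the $\pi$-periodicity of $\cos^2$ lets us shift $\psi$ so that $\phi$ disappears. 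Hence $K_\alpha$ depends on $z$ only through $\rho = |z|$.

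The next step is the explicit evaluation of this radial integral. The Weierstrass substitution $u=\tan\psi$ in
\[
K_\alpha(\rho)\ =\ \frac{1}{2\pi^2}\int_0^\pi \frac{\alpha^2-\rho^2\cos^2\psi}{[\alpha^2+\rho^2\cos^2\psi]^2}\,d\psi
\]
reduces the $\psi$-integral to an elementary rational integral on $[0,\infty)$, which the standard formulas $\int_0^\infty(v^2+B)^{-1}dv = \pi/(2\sqrt{B})$ and $\int_0^\infty(v^2+B)^{-2}dv = \pi/(4B^{3/2})$ evaluate to
\[
K_\alpha(\rho)\ =\ \frac{\alpha}{2\pi\,(\alpha^2+\rho^2)^{3/2}} .
\]
This closed form is manifestly \emph{positive}, and a polar-coordinate computation shows $\int_{\mathbb{R}^2} K_\alpha(|z|)\,dz = 1$. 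Consequently $A_\alpha f(x)$ is a weighted average of the values of $f$ against a probability density, and both conclusions follow at once: if $|f|\le C$ then $|A_\alpha f(x)|\le C\int K_\alpha = C$, and $A\le f\le B$ pointwise yields $A\le A_\alpha f(x)\le B$.

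The main obstacle is that the original integrand $\Phi_\alpha$ is \emph{not} of one sign --- its numerator $\alpha^2 - (x\cdot\mathbf{n}-t)^2$ vanishes at $|x\cdot\mathbf{n}-t| = \alpha$ and changes sign there --- so no positivity can be read off directly from (1.2). The positivity of $A_\alpha$ is a genuine cancellation phenomenon that only emerges after integrating against $d\psi$, which makes the explicit computation of $K_\alpha$ the essential content of the argument; the rest of the proof is bookkeeping.
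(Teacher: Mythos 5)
Your argument is correct, and it reaches the same pivotal fact as the paper --- the convolution representation $A_\alpha f = H_\alpha * f$ with $H_\alpha(z) = \tfrac{\alpha}{2\pi}(\alpha^2+|z|^2)^{-3/2}$, from which both bounds follow exactly as you say, since $H_\alpha \geq 0$ and $\int_{\mathbb{R}^2} H_\alpha = 1$ --- but you get there by a genuinely different route. The paper establishes this representation in Proposition 3.1 by passing through the Fourier transform: it first identifies $A_\alpha f$ with the Fourier--Abel means $\frac{1}{(2\pi)^2}\int \widehat{f}(\xi)e^{ix\xi-\alpha|\xi|}d\xi$ via Radon's projection-slice identity and the evaluation of $\int_0^\infty e^{-\alpha s}\cos(\beta s)\,s\,ds$, then computes $\widehat{H_\alpha}(\xi) = e^{-\alpha|\xi|}$ using Sonine's Bessel-function integral, and finally invokes Fourier inversion (plus a continuity argument to upgrade from almost everywhere to everywhere). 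You instead unpack $Rf$ inside the definition of $A_\alpha f$, justify Fubini with the bound $|\Phi_\alpha|\le 1/(2\pi^2\alpha^2)$ and $f\in L(\mathbb{R}^2)$ (note it is integrability, not boundedness, of $f$ that makes the iterated integral absolutely convergent, but that hypothesis is available), and evaluate the angular average $\frac{1}{2\pi^2}\int_0^\pi\frac{\alpha^2-\rho^2\cos^2\psi}{[\alpha^2+\rho^2\cos^2\psi]^2}\,d\psi$ in closed form; I checked the computation (writing the numerator as $2\alpha^2-(\alpha^2+\rho^2\cos^2\psi)$ and using $\int_0^\pi(a^2+b^2\cos^2\psi)^{-1}d\psi = \pi/(a\sqrt{a^2+b^2})$ and its derivative in $a^2$) and it does yield $\tfrac{\alpha}{2\pi}(\alpha^2+\rho^2)^{-3/2}$. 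Your approach is more elementary and self-contained --- no Bessel functions, no Fourier inversion, no a.e.-versus-everywhere issue --- and it isolates cleanly the point you highlight at the end: $\Phi_\alpha$ changes sign, and the positivity of the scheme is created only by the integration in $\psi$. What the paper's detour buys is the identity $\widehat{H_\alpha}(\xi)=e^{-\alpha|\xi|}$, which ties $A_\alpha$ to the classical Abel summation of the Fourier integral and is reused for Theorems 1.1 and 1.3; your computation proves the bound but not that connection. (One cosmetic point: the substitution $v=\tan\psi$ requires splitting $[0,\pi]$ at $\pi/2$, and it is the ordinary tangent substitution rather than the Weierstrass half-angle one, but neither affects correctness.)
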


 \begin{theorem} If $f\in L(\mathbb{R}^2)$ is piecewise
 	smooth and continuous on some open set $\Omega\subset
 	\mathbb{R}^2$ then  $A_\alpha f$ converges to $f$ uniformly on compact subsets of $\Omega$.
 \end{theorem}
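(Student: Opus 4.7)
The plan is to observe that the operator $A_\alpha$ factors through an approximate identity on $\mathbb{R}^2$, and then to apply a standard compactness-continuity argument. A one-dimensional Fourier calculation of $\Phi_\alpha(x,t,\psi)$ in the $t$-variable combined with the Fourier slice theorem (a computation presumably carried out in the proof of Theorem 1) should give the convolution representation
\[
  A_\alpha f(x) \;=\; (P_\alpha * f)(x), \qquad P_\alpha(y) \;=\; \frac{\alpha}{2\pi(\alpha^2+|y|^2)^{3/2}},
\]
where $P_\alpha$ is the two-dimensional Poisson kernel. The properties of $P_\alpha$ that I will use are that $P_\alpha \geq 0$, $\|P_\alpha\|_{L^1(\mathbb{R}^2)} = 1$, and $\sup_{|y|\geq\delta} P_\alpha(y) = O(\alpha)$ as $\alpha \to 0$ for every fixed $\delta > 0$; thus $\{P_\alpha\}$ is a non-negative approximate identity whose mass concentrates at the origin.

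Fix a compact $K \subset \Omega$ and $\varepsilon > 0$. Since $\Omega$ is open, I may choose $\delta > 0$ small enough that the closed $\delta$-neighborhood $K_\delta$ of $K$ is contained in $\Omega$; by continuity of $f$ on $K_\delta$, $f$ is uniformly continuous there, and (after possibly shrinking $\delta$) I may arrange $|f(x-y) - f(x)| < \varepsilon$ whenever $x \in K$ and $|y| \leq \delta$. For such $x$, split
\[
  A_\alpha f(x) - f(x) \;=\; \int_{|y| \leq \delta}\!\! P_\alpha(y)[f(x-y) - f(x)]\,dy \;+\; \int_{|y| > \delta}\!\! P_\alpha(y)[f(x-y) - f(x)]\,dy.
\]
The first integral is bounded in absolute value by $\varepsilon \|P_\alpha\|_1 = \varepsilon$, uniformly in $x \in K$. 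The second integral is bounded by
\[
  \|f\|_1 \sup_{|y| > \delta} P_\alpha(y) \;+\; \max_{K} |f| \cdot \int_{|y|>\delta} P_\alpha(y)\,dy,
\]
and both terms tend to $0$ as $\alpha \to 0$ independently of $x \in K$, since the Poisson tails vanish. Letting $\varepsilon \to 0$ completes the argument.

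The principal obstacle is the convolution identity $A_\alpha f = P_\alpha * f$ for a general piecewise smooth integrable $f$. Justifying it requires interchanging integrations and invoking Fourier inversion in a setting where $f$ need not be Schwartz; this technical step is presumably handled in the proof of Theorem 1, and once available, Theorem 3 reduces to the standard approximate-identity argument above. An alternative, if the convolution representation is unwanted, is to localize directly: write $f = \varphi f + (1-\varphi)f$ for a cutoff $\varphi \in C_c^\infty(\Omega)$ equal to $1$ near $K$, note that $\varphi f$ is uniformly continuous and compactly supported so the pointwise argument of Theorem 1 can be made uniform via uniform continuity, and handle $(1-\varphi)f$ by using that it vanishes on a fixed neighborhood of $K$ so its contribution is controlled by the tail of the kernel.
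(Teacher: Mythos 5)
Your proposal is correct and follows essentially the same route as the paper: the convolution identity $A_\alpha f = P_\alpha * f$ is exactly the paper's Proposition 3.1 (its kernel $H_\alpha$ is your $P_\alpha$), and the paper's one-line proof of Theorem 1.3 invokes the polar-coordinate form (3.7) of that identity together with the uniform convergence of the circular averages $S_r f \to f$ on compact sets, which amounts to the same approximate-identity splitting you carry out. Your write-up is in fact more detailed than the paper's, supplying explicitly the tail estimate via $\|f\|_{L^1}$ and the decay of the kernel away from the origin that the paper leaves implicit.
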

  
 \begin{remark*} The kernel (1.3) is an improved version of the
 corresponding kernel which was proposed in \cite{Alimov}.
 \end{remark*}
 
 Our method to prove these theorems will be to find alternative representations of $A_\alpha f$ that connect it to the classical Fourier Abel means of $f$.
 
 \section{The Abel means of the inverse Radon transform }
 
 \
 Following the classical paper of J. Radon \cite{Radon}, we consider the
 Fourier transform
 \[
 \widehat{f}(\xi)\ =\ \int\limits_{\mathbb{R}^2} f(x)\,
 \exp(-ix\xi)\, dx.
 \] Let $\xi=s(\cos\psi,\sin\psi)$, where $s\in\mathbb{R}$ and
 $\psi\in[0,\pi)$. We change variables by
 \[
 y_1 = x_1\cos\psi + x_2\sin\psi, \quad y_2 = -\,x_1\sin\psi +
 x_2\cos\psi.
 \] The Jacobian determinant of this transformation is $1$, and its inverse transform is given by
 \[
 x_1 = y_1\cos\psi - y_2\sin\psi, \quad x_2 = y_1\sin\psi +
 y_2\cos\psi. \tag{2.1}
 \] Hence,
 \[
 \widehat{f}(\xi)\ =\ \int\limits_{-\infty}^\infty dy_1
 \int\limits_{-\infty}^\infty f(y_1\cos\psi - y_2\sin\psi,
 y_1\sin\psi + y_2\cos\psi)\exp(-ix\xi)\, dy_2 .
 \] Note that by (2.1)
 \[
 x\xi\ =\ (y_1\cos\psi - y_2\sin\psi, y_1\sin\psi +
 y_2\cos\psi)\cdot s(\cos\psi,\sin\psi)\ =\ y_1 s.
 \] Therefore,
 \[
 \widehat{f}(\xi)\ =\ \int\limits_{-\infty}^\infty \exp(-is y_1)\,
 dy_1 \int\limits_{-\infty}^\infty f(y_1\cos\psi - y_2\sin\psi,
 y_1\sin\psi + y_2\cos\psi)\, dy_2\ . \tag{2.2}
 \]
 
 Substituting the definition of the Radon transform (1.1) to (2.2), we have proven the following proposition.

 \begin{proposition}[J. Radon] If $f\in L(\mathbb{R}^2)$,
 \[
 \widehat{f}(\xi)\ =\ \int\limits_{-\infty}^\infty
 Rf(t,\psi)\, \exp(-is t) \, dt, \tag{2.3}
 \]
where $Rf$ is the Radon transform of $f$ and $\xi=s(\cos\psi,\sin\psi)$.
\end{proposition}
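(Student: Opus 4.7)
The plan is to compute the Fourier transform $\widehat{f}(\xi)$ directly by introducing polar coordinates adapted to the direction of $\xi$, and then recognizing the resulting inner integral as the Radon transform. Writing $\xi = s(\cos\psi,\sin\psi)$, I would apply the orthogonal change of variables in $\mathbb{R}^2$ given by $y_1 = x_1\cos\psi + x_2\sin\psi$ and $y_2 = -x_1\sin\psi + x_2\cos\psi$, i.e.\ a rotation by angle $-\psi$. This rotation is volume-preserving (Jacobian $1$), so $dx_1\,dx_2 = dy_1\,dy_2$.

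The critical simplification is that, under this rotation, the dot product $x\cdot\xi$ collapses to $y_1 s$, because the rotation has aligned the $y_1$-axis with the direction of $\xi$. Consequently, the $y_2$ variable disappears from the complex exponential entirely, so that $\exp(-ix\cdot\xi) = \exp(-is y_1)$ depends only on $y_1$. Since $f \in L(\mathbb{R}^2)$, Fubini's theorem applies and I can factor the double integral as an outer integral over $y_1$ with weight $\exp(-isy_1)$, times an inner integral over $y_2$ of $f$ evaluated at the rotated argument $(y_1\cos\psi - y_2\sin\psi,\ y_1\sin\psi + y_2\cos\psi)$.

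The final step is pure recognition: comparing with definition (1.1), the inner $y_2$-integral is exactly $Rf(y_1,\psi)$, with the dummy variable $y_1$ playing the role of $t$ and $y_2$ playing the role of $u$. Relabeling $y_1 \mapsto t$ yields (2.3).

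I do not anticipate any serious obstacle here: the proof is essentially bookkeeping once the correct adapted coordinate system is chosen. The only point requiring any care is the application of Fubini, which is justified immediately by $f \in L(\mathbb{R}^2)$, and the cosmetic matching between the inner integral and the defining formula (1.1) for $Rf$.
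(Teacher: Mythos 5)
Your proposal is correct and follows essentially the same route as the paper: the same rotation $y_1 = x_1\cos\psi + x_2\sin\psi$, $y_2 = -x_1\sin\psi + x_2\cos\psi$ with unit Jacobian, the same collapse $x\cdot\xi = y_1 s$, the same application of Fubini, and the same identification of the inner $y_2$-integral with $Rf(y_1,\psi)$ via definition (1.1). No gaps.
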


 We now consider the Fourier-Abel means of the inverse Fourier transform
 \[
 A_\alpha^F f(x)\ =\ \frac{1}{(2\pi)^2} \int\limits_{\mathbb{R}^2}
 \widehat{f}(\xi)\, \exp(ix\xi - \alpha|\xi|) \, d\xi. \tag{2.4}
 \] Our notation $A_\alpha^Ff$ is temporary: once the following proposition is proven, we will use $A_\alpha f$ interchangeably for $A_\alpha f$ and $A_\alpha ^Ff$.
 
 \begin{proposition} For any $f \in L(\mathbb{R}^2)$,  $A_\alpha f = A_\alpha^Ff.$
 \end{proposition}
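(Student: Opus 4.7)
The plan is to start from the definition (2.4) of $A_\alpha^F f$, pass to polar coordinates adapted to the Radon geometry, invoke Proposition 2.1 to replace $\widehat f$ by an integral of $Rf$, and finally identify $\Phi_\alpha$ as an explicit one-dimensional Fourier integral.

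First, I would write $\xi = s(\cos\psi,\sin\psi)$ with $\psi\in[0,\pi)$ and $s\in\mathbb{R}$, so that $d\xi = |s|\,ds\,d\psi$. Setting $\tau = x_1\cos\psi+x_2\sin\psi$, formula (2.4) becomes
\[
A_\alpha^F f(x) \;=\; \frac{1}{(2\pi)^2}\int_0^\pi d\psi\int_{-\infty}^\infty \widehat f(s\cos\psi,s\sin\psi)\,|s|\,e^{-\alpha|s|}\,e^{is\tau}\,ds.
\]
Substituting $\widehat f(s\cos\psi,s\sin\psi) = \int_{-\infty}^\infty Rf(t,\psi)\,e^{-ist}\,dt$ from Proposition 2.1 produces a triple integral over $(\psi,s,t)$.

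Next, I would justify exchanging the $ds$ and $dt$ integrations by Fubini's theorem. The absolute value of the integrand is $|s|e^{-\alpha|s|}|Rf(t,\psi)|$; the $s$-factor integrates to $2/\alpha^2$, and applying Fubini to the defining formula (1.1) gives $\int_0^\pi d\psi \int_{-\infty}^\infty |Rf(t,\psi)|\,dt \leq \pi\,\|f\|_{L^1}$. Hence the triple integrand is absolutely integrable, and interchanging the inner orders yields
\[
A_\alpha^F f(x) \;=\; \int_0^\pi d\psi\int_{-\infty}^\infty Rf(t,\psi)\,K_\alpha(\tau-t)\,dt, \qquad K_\alpha(u) \;:=\; \frac{1}{(2\pi)^2}\int_{-\infty}^\infty |s|\,e^{-\alpha|s|}\,e^{isu}\,ds.
\]

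Finally, an elementary computation gives $K_\alpha$ explicitly. Using the evenness of $|s|e^{-\alpha|s|}$ and Euler's formula,
\[
K_\alpha(u) \;=\; \frac{1}{2\pi^2}\,\mathrm{Re}\!\int_0^\infty s\,e^{-(\alpha-iu)s}\,ds \;=\; \frac{1}{2\pi^2}\,\mathrm{Re}\,\frac{1}{(\alpha-iu)^2} \;=\; \frac{1}{2\pi^2}\cdot\frac{\alpha^2-u^2}{(\alpha^2+u^2)^2}.
\]
With $u = \tau - t = x_1\cos\psi+x_2\sin\psi-t$, this is exactly $\Phi_\alpha(x,t,\psi)$ from (1.3), so the right-hand side equals $A_\alpha f(x)$ as defined in (1.2). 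The only genuinely delicate point in the argument is the Fubini justification; I anticipate that the Abel damping factor $e^{-\alpha|s|}$, together with the $L^1$ bound on $Rf$ noted above, comfortably handles it.
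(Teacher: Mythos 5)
Your proposal is correct and follows essentially the same route as the paper: polar coordinates $\xi = s(\cos\psi,\sin\psi)$ with Jacobian $|s|$, substitution of Radon's formula (2.3) for $\widehat f$, an interchange of integrals, and explicit evaluation of the one-dimensional kernel integral to recover (1.3). The only differences are cosmetic improvements on your part --- you supply the absolute-integrability estimate justifying Fubini, which the paper merely invokes, and you evaluate $\int_0^\infty s\,e^{-\alpha s}\cos(us)\,ds$ directly via $\mathrm{Re}\,(\alpha - iu)^{-2}$ rather than citing it as a known identity.
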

 
 \begin{proof} We introduce new variables in (2.4):
 	\[
 	\xi=(s\cos\psi,s\sin\psi),\ 0\leq \psi<\pi,\ -\infty<s<+\infty.
 	\] Then,
 	\[
 	x\xi\ =\ s(x_1\cos\psi + x_2\sin\psi).
 	\] The Jacobian determinant corresponding to this change of variables is
 	\[
 	J(s,\psi)\ =\ \begin{vmatrix}\dfrac{\partial \xi_1}{\partial s}
 	\quad \dfrac{\partial \xi_1}{\partial \psi}\\
 	\\
 	\dfrac{\partial \xi_2}{\partial s}
 	\quad \dfrac{\partial \xi_2}{\partial \psi}\\
 	\end{vmatrix}\ =\ \begin{vmatrix}\cos\psi
 	\ (-s)\sin\psi\\
 	\\
 	\sin\psi
 	\quad\ s\cos\psi\\
 	\end{vmatrix}\ =\ s.
 	\] Hence, $|J(s,\psi)| = |s|$ and thus $d\xi = |s|ds\,d\psi$. Then by Proposition 2.1 and Fubini's theorem, denoting $(x_1 \cos \psi + x_2 \sin \psi)$ by $\beta = \beta(x,\psi)$ we see
 	\begin{align*}
 	A_\alpha^F f(x) &= \frac{1}{(2\pi)^2} \int\limits_{0}^\pi d\psi
 	\int\limits_{-\infty}^\infty \left(\int\limits_{-\infty}^\infty
 	\exp(-is t) Rf(t,\psi) dt\right) \exp(is\beta -
 	\alpha|s|)\, |s| ds \\
 	\tag{2.5} &= \frac{1}{(2\pi)^2} \int\limits_{0}^\pi d \psi \int\limits_{-\infty}^\infty \left( \int\limits_{-\infty}^\infty \exp(is(\beta - t) - \alpha |s|) \, |s| ds \right) Rf(t, \psi) dt.
 	\end{align*}
 	
 	Set
 	\[
 	\Phi_\alpha(x,t,\psi)\ =\ (2\pi)^{-2} \int\limits_{-\infty}^\infty
 	\exp(is(\beta - t) - \alpha|s|) \, |s| ds.
 	\tag{2.6}
 	\] By breaking up the integral in (2.6) and applying exponential identities, we see that
 	\[
 	\Phi_\alpha(x,t,\psi)\ =\ \frac{1}{2\pi^2} \int\limits_{0}^\infty
 	e^{- \alpha s} \cos [s(\beta - t)] \, s\, ds.
 	\tag{2.7}
 	\] Now, applying the known integral identity
 	\[
 	\int\limits_0^\infty e^{-\alpha s}\ \cos (\beta s)\, s\, ds\ =\
 	\frac{\alpha^2 - \beta^2}{(\alpha^2 + \beta^2)^2}, \quad \alpha>0\
 	, \quad \beta\in\mathbb{R},
 	\]
 	and substituting into (2.5), we obtain that
 	\[ A_\alpha^F f(x) = \int\limits_0^\pi d\psi \int\limits_{-\infty}^\infty \Phi_\alpha(x,t,\psi) Rf(t, \psi) dt = A_\alpha f(x).
 	\qedhere \] \end{proof}
 
\section{The kernel of Abel means}	
 \par In this section, we apply our previous results to prove the central findings of this paper. Set
 \[
 H_\alpha(x)\ =\ \frac{1}{2\pi}\ \frac{\alpha}{(\alpha^2 +
 	|x|^2)^{3/2}}, \quad x\in\mathbb{R}^2 . \tag{3.1}
 \] The Fourier transform of this function is
 \begin{align*}
 \widehat{H_\alpha}(\xi)\ =\ \frac{\alpha}{2\pi}
 \int\limits_{\mathbb{R}^2} \frac{\exp(-ix\xi)\, dx}{(\alpha^2 +
 	|x|^2)^{3/2}}\ =\ \frac{\alpha}{2 \pi} \int\limits_0^\infty
 \frac{r\,dr}{(\alpha^2 + r^2)^{3/2}}
 \int\limits_0^{2\pi} \exp(-ir|\xi|\cos\theta)\, d\theta. \tag{3.2}
 \end{align*}We will now make use of the facts that
 \[
 \frac{1}{2\pi} \int\limits_0^{2\pi} \exp(-ir|\xi|\cos\theta)\,
 d\theta\ =\ J_0(r|\xi|), \tag{3.3}
 \]
 where $J_0$ is the Bessel function of the first kind of order 0 and
 
 \[
 \alpha \int\limits_0^\infty \frac{J_0(r|\xi|)\,r\,dr}{(\alpha^2 +
 	r^2)^{3/2}}\ =\ \exp(-\alpha|\xi|). \tag{3.4}
 \]
 
 The equation (3.3) is known to follow from the definition of the
 Bessel functions (see \cite{Watson} formula 2.2(5)) and the
 equation (3.4) was proved by N. Sonine (see \cite{Watson},
 formulas 13.6(2) and 3.71(13)).

 \par Combining (3.2)-(3.4) shows that $\widehat{H_\alpha}(\xi)\ = \exp(-\alpha |\xi|)$. Therefore for any $f$ in $L(\mathbb{R}^2)$, \[A_\alpha f(x) = \frac{1}{(2\pi)^2}\int\limits_{\mathbb{R}^2} \widehat{f}(\xi) \widehat{H_\alpha}(\xi)\exp(ix\xi) d\xi = \frac{1}{(2\pi)^2}\int\limits_{\mathbb{R}^2}\widehat{f * H_\alpha}(\xi) \exp(i x \xi)\, d\xi.\] Since $f \in L(\mathbb{R}^2),$ $\widehat{f}$ is bounded, hence the map $\xi \mapsto  \widehat{f}(\xi)\exp(-\alpha |\xi|)$ is in $L(\mathbb{R}^2)$. Thus, the Fourier inversion theorem demonstrates that
 \[
 A_\alpha f(x)\ =\ (H_\alpha*f)(x)\ =\ \int\limits_{\mathbb{R}^2}
 H_\alpha(x-y)\, f(y)\, dy\
 \] almost everywhere. Since both $A_\alpha f$ and $H_\alpha * f$ are continuous, we see that $A_\alpha f(x) = (H_\alpha * f)(x)$ for all $x \in \mathbb{R}^2$. From this, we deduce the following proposition.
 
 \begin{proposition} For any $f \in L(\mathbb{R}^2)$,
 
 \[
 A_\alpha f(x)\ =\ \frac{\alpha}{2\pi} \int\limits_{\mathbb{R}^2}
 \frac{f(x+y)}{(\alpha^2 + |y|^2)^{3/2}}\, dy\ . \tag{3.5}\label{abel-convolution-rep}
 \]
 \end{proposition}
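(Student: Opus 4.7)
The plan is to derive the stated integral representation directly from the convolution identity $A_\alpha f(x) = (H_\alpha * f)(x)$ established immediately above the proposition. This identity reduces the proposition to a purely mechanical rewriting, so the ``proof'' is essentially a change of variables combined with the explicit formula (3.1) for $H_\alpha$.

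Concretely, I would first unpack the convolution as
\[
(H_\alpha * f)(x) = \int_{\mathbb{R}^2} H_\alpha(x - y)\, f(y)\, dy,
\]
then substitute $z = y - x$, which has Jacobian determinant $1$ and sends $x - y$ to $-z$. Since $H_\alpha(w) = \frac{1}{2\pi}\,\frac{\alpha}{(\alpha^2 + |w|^2)^{3/2}}$ depends only on $|w|$, we have $H_\alpha(-z) = H_\alpha(z)$, so after the substitution the integrand becomes $H_\alpha(z)\, f(x + z)$. Inserting the explicit formula for $H_\alpha$ and relabeling $z$ as $y$ yields (3.5).

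There is no substantive obstacle here: the preceding paragraph has already done the analytic work (the Fourier-transform identity $\widehat{H_\alpha}(\xi) = \exp(-\alpha|\xi|)$, the use of Fourier inversion to pass from $A_\alpha^F f$ to $A_\alpha f = H_\alpha * f$, and the continuity argument upgrading the almost-everywhere equality to a pointwise one). The only point worth flagging explicitly, so that the reader is not left to check it silently, is the radial symmetry of $H_\alpha$, which is what allows the change of variables to produce $f(x+y)$ rather than $f(x-y)$; either sign convention gives the same integral, but writing $f(x+y)$ matches the form (3.5) used in the subsequent sections.
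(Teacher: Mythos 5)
Your proposal is correct and follows exactly the route the paper intends: the paper establishes $A_\alpha f = H_\alpha * f$ pointwise in the paragraph preceding the proposition and then ``deduces'' (3.5), the deduction being precisely the change of variables $z = y - x$ together with the radial symmetry of $H_\alpha$ that you spell out. Your version is if anything slightly more careful, since the paper leaves that substitution implicit.
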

 
 \ We now establish Theorems $1.1-1.3$. The key remaining fact needed
 to prove these theorems is that the distributions $H_\alpha$ and
 $K_\alpha: \mathbb{R} \rightarrow \mathbb{R}$ given by \[ x
 \mapsto \begin{cases} \displaystyle\frac{\alpha x}{(\alpha^2 +
 	x^2)^{3/2}}, & x > 0 \\ 0, & x \leq 0 \end{cases}\] are
 $\delta$-shaped kernels. To show this, we must demonstrate
 \begin{enumerate}
 	\item $K_\alpha(x), \geq 0$ and for all $x \in \mathbb{R}$,
 	\item For any $\delta > 0$, $K_\alpha$ converges uniformly to $0$ outside of $[-\delta, \delta]$,
 	\item For all $\alpha > 0$, \[\int_{\mathbb{R}} K_\alpha(x) dx = 1. \tag{3.6}\label{helpful-integral}\]
 \end{enumerate} and that $H_\alpha$ satisfies analogous conditions.
 We begin with $K_\alpha$. $(1)$ is clear. To see $(2)$, Outside of $[-\delta, \delta]$, \[K_\alpha(x) \leq \frac{\alpha x}{(\alpha^2 + x^2)^{3/2}} \leq \alpha \frac{1}{\delta^2}, \] which converges uniformly to $0$ outside of $[-\delta, \delta]$. For $(3)$, use the substitution $u = \alpha^2 + r^2$ to see \[\int\limits_{\mathbb{R}}K_\alpha dx = \alpha \int\limits_0^\infty \frac{x \, dx}{(\alpha^2 + x^2)^{3/2}} = -\frac{\alpha}{\sqrt{\alpha^2 + r^2}} \bigg|_{r=0}^{r=\infty} = 1.\] Thus, $K_\alpha$ is $\delta$-shaped.
 
 \par For $H_\alpha$, $(1)$ is also clear. To see $(2)$, let $\delta > 0$ be given. Then, whenever $|x| > \delta$,
 \[
 H_\alpha(x)\ \leq\ \frac{1}{2\pi}\ \frac{\alpha}{(\alpha^2 +
 	\delta^2)^{3/2}}\ \leq\ \frac{1}{2\pi}\ \frac{\alpha}{\delta^3}.
 \] Thus, $H_\alpha(x)\to 0$ uniformly as $\alpha\to0$ outside of $[-\delta, \delta]$. For $(3)$, writing the integral in polar coordinates and comparing to (\ref{helpful-integral}) shows
 \[
 \int\limits_{\mathbb{R}^2} H_\alpha(x)\, dx\ =\ \alpha
 \int\limits_0^\infty \frac{r\, dr}{(\alpha^2 + r^2)^{3/2}}\ =1,
 \] completing the proof that $H_\alpha$ is $\delta$-shaped.
 
 If $f$ is piecewise smooth and $x \in \mathbb{R}^2$ is fixed, writing the representation in (\ref{abel-convolution-rep}) in polar coordinates, we see that \begin{align*}A_\alpha f(x) &= \frac{\alpha}{2\pi} \int\limits_0^\infty \frac{r}{(\alpha^2 + r^2)^{3/2}} \left(\int\limits_0^{2 \pi} f(x + (r \cos \theta, r \sin \theta)) d \theta \right) dr \\ &= \alpha \int\limits_0^\infty \frac{r S_rf(x)}{(\alpha^2 + r^2)^{3/2}} dr. \tag{3.7}\label{surprise-Sf}\end{align*} %
 %
 %
 %
 %
 
 Now, adding and subtracting $Sf(x)$ from (\ref{surprise-Sf}) we obtain that \begin{align*}A_\alpha f(x) &= S f(x) + \alpha \int\limits_{0}^\infty \frac{r[S_rf(x) - Sf(x)]}{(\alpha^2 + r^2)^{3/2}}dr\\
  &= Sf(x) + \int\limits_{\mathbb{R}} K_\alpha(r)\left[S_rf(x) - Sf(x) \right]dr.\tag{3.8}\label{final-step-major-result}\end{align*}
 
 If we let $S_x : \mathbb{R} \rightarrow \mathbb{C}$ be the map $r \rightarrow S_{|r|}f(x) - Sf(x)$, (\ref{final-step-major-result}) precisely says \[A_\alpha f(x) = Sf(x) + (K_\alpha * S_x)(0).\] Since $f$ is piecewise smooth, as $r \rightarrow 0$, $S_{|r|}f(x) \rightarrow Sf(x)$. Hence $S_x$ is continuous at $0$ with $S_x(0) = 0$. Since $K_\alpha$ is $\delta$-shaped, (\ref{final-step-major-result}) shows \[\lim_{\alpha \rightarrow 0} A_\alpha f(x) = \lim_{\alpha \rightarrow 0} Sf(x) + (K_\alpha * S_x)(0) = Sf(x) - S_x(0) = Sf(x),\] proving Theorem $1.1$.

 \par To show Theorem $1.2$, suppose that $f \in L(\mathbb{R}^2)$ is piecewise smooth and $|f(x)| \leq C$ for all $x \in \mathbb{R}^2.$ Then, \[|A_\alpha f(x)| \leq \int\limits_{\mathbb{R}^2} |f(x - y)| H_\alpha(y) dy \leq C \int\limits_{\mathbb{R}^2} H_\alpha(y)dy = C,\] where the last two steps are due to the fact that $H_\alpha$ is $\delta$-shaped. Identical arguments show the remainder of theorem $1.2$'s assertions.

 \par To prove Theorem $1.3$, let $f$ be peicewise smooth, continuous on an open set $\Omega$ and $K \subset \Omega$ be compact. Then the theorem can be seen as a consequence of (\ref{surprise-Sf}) and the fact that $S_rf \rightarrow f$ uniformly on $K$.
 
\section{Discussion of the Kernel}

 \par The kernel $\Phi_\alpha$ that appears in the Abel means (\ref{Abel-means}) has some properties that demand attention in computational applications of this method.
 We discuss these and present suggestions. For a concrete example
 of these patterns, see Section 5. \par If we regard $x$ and $\psi$
 as fixed, as is the case within the inner integral of (1.1), and
 set $\beta = x_1 \cos(\psi) + x_2 \sin(\psi),$ we obtain that
 \[\Phi_\alpha(x, t, \psi) = \frac{1}{2\pi^2}\frac{\alpha^2 -
 	(\beta - t)^2}{(\alpha^2 + (\beta - t)^2)^2}.\] Figure
 (\ref{phi-alpha-plot}) shows the shape of this distribution.
 Differentiating with respect to $t$ and substituting critical
 values yields the following features of $\Phi_\alpha$ with fixed
 $x$, $\psi$:
 
 \begin{enumerate}
 	\item The unique maximum of $\Phi_\alpha$ occurs at $t = \beta$, with $2 \pi^2 \Phi_\alpha(x,\beta,\psi) = (1/\alpha)^2$;
 	\item The two minima of $\Phi_\alpha$ occur at $\beta + \alpha\sqrt 3$ and $\beta- \alpha\sqrt 3 $, each with value $(-1/8)\Phi_\alpha(x, \beta, \psi)$;
 	\item $\Phi_\alpha$ tends to $0$ monotonically outside of $[\beta - \alpha{\sqrt 3}, \beta + \alpha\sqrt 3]$ with speed on the order of $(1/t)^2.$
 \end{enumerate}
 As $\alpha$ tends to $0$, this results in a small area centered around $\beta$ in which $\Phi_\alpha$ is extremely large, and outside of which $\Phi_\alpha$ is small.

 \begin{figure}[t]
 	\includegraphics[scale = 0.6]{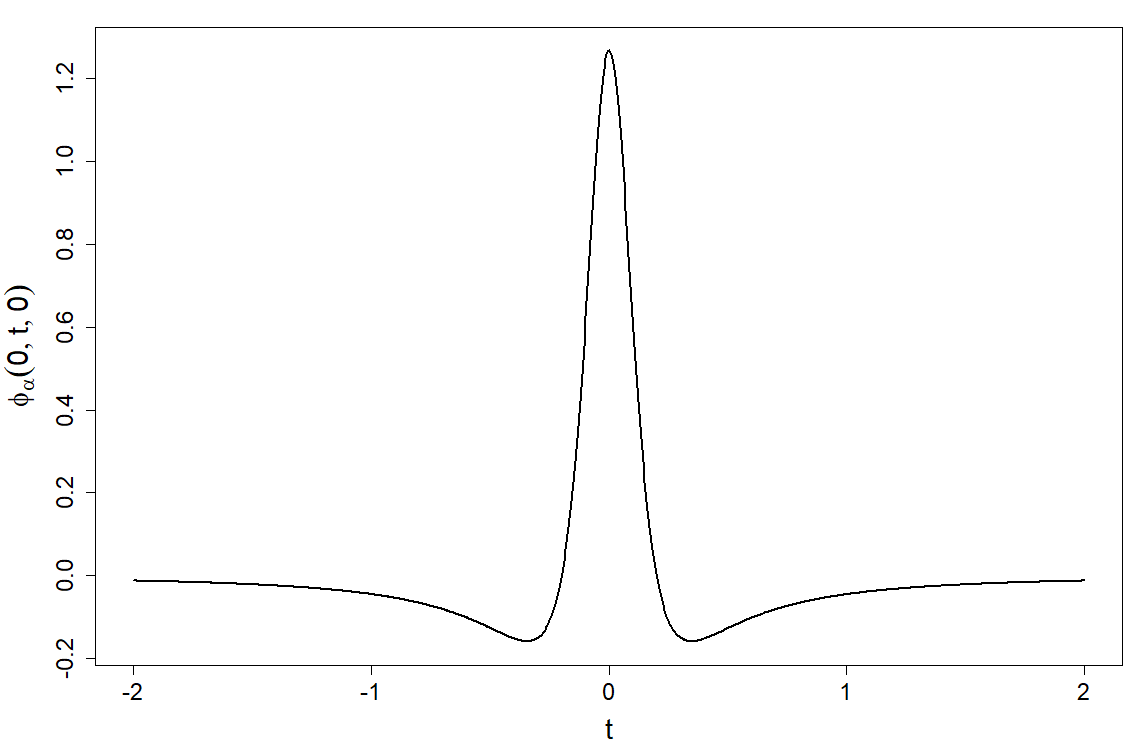}
 	\caption{The kernel $\Phi_\alpha$ with $x = 0$, $\psi = 0$ and $\alpha = 0.2$. As $\alpha$ grows smaller, the peak rapidly becomes larger and narrower.}\label{phi-alpha-plot}
 \end{figure}

 \par This presents problems to computer integration procedures if no modifications are made. This is because integration procedures approximate the values of functions on intervals with samples. When the peak is small enough, it becomes likely for the procedure to never sample the peak, and if a sample does happen to hit the peak, the procedure will strongly overestimate the peak's contribution. The effect of this is a poor reconstruction of the original function with high variance.
 
 \par Our knowledge of the behavior of $\Phi_\alpha$ gives a method to mitigate this problem. Since $\Phi_\alpha$ is large close to $t = \beta$ and small far from it, calculating the contribution of the area close to this critical value with a small step size separately from the rest of $\Phi_\alpha$ diminishes this error. The simplest way to accomplish this is to have a computer evaluate $A_\alpha$ as three integrals of the form
 \begin{align*}
 \int\limits_{0}^\pi \int\limits_{-\infty}^{\beta - \epsilon} \Phi_\alpha(x,t,\psi)Rf(t,\psi) dt d\psi &+\int\limits_{0}^\pi \int\limits_{\beta - \epsilon}^{\beta + \epsilon} \Phi_\alpha(x,t,\psi)Rf(t,\psi) dt d\psi \\
 \tag{4.1}&+ \int\limits_0^\pi \int\limits_{\beta + \epsilon}^\infty \Phi_\alpha(x,t,\psi)Rf(t,\psi) dt d\psi,
 \end{align*} where $\beta = \beta(x, \psi)$ is as above and the choice of an optimal $\epsilon > 0$ will be dependent on the desired $\alpha$ value and context of the application. The example in Section $5$ demonstrates how substantial a change this minor modification with $\epsilon = 2 \alpha$ makes to the accuracy with which a computer-evaluated $A_\alpha$ can reconstruct a function.
 
 \par With this change, computer calculations of $A_\alpha$ can remain accurate for much smaller $\alpha$ without substantially increasing the number of samples needed. However, the issue will still appear with small enough $\alpha$ since the area on which $\Phi_\alpha$ is large diminishes at a rate on the order of $\alpha$ while the peak grows at a rate on the order of $(1/\alpha)^2$.
 
 
 \section{Example}
 
 For positive $\rho_1$ and $\rho_2$ in  $(0, 1)$, consider the following discs:
 
 \[
 f(x,y)\ =\ \begin{cases} 1, & x^2 + y^2 \leq \rho_1^2,\\
 0, & x^2 + y^2 > \rho_1^2;
 \end{cases} \tag{5.1}
 \]

 \[
 g(x,y)\ =\ \begin{cases} 1, & (x-1)^2 + y^2 \leq \rho_2^2,\\
 0, & (x-1)^2 + y^2 > \rho_2^2.
 \end{cases} \tag{5.2}
 \]
 
 An elementary trigonometric argument demonstrates that these functions have the following Radon transforms:
 
 \[
 Rf(t,\psi)\ =\ \begin{cases} 2\,\,\sqrt[]{\rho_1^2-t^2}, & |t|\leq \rho_1,\\
 0, & |t| > \rho_1.
 \end{cases} \tag{5.3}
 \]
 
 
 \[
 Rg(t,\psi)\ =\ \begin{cases} 2\,\sqrt{\rho_2^2
 	-[t-\cos(\psi)]^2}, & \rho_2 \leq|t-\cos(\psi)|,\\
 0, &  \rho_2 >|t-\cos(\psi)|.
 \end{cases} \tag{5.4}
 \]

 Moreover, $R(f+g)(t,\psi) = Rf(t,\psi)+Rg(t,\psi)$.
 
 \par Taking $\rho_1 = 2$ and $\rho_2 = \frac{1}{2}$, we use numerical integration methods in an attempt to recover $f+g$ using (1.2), (5.3), and (5.4) by taking sufficiently small values of $\alpha$. Computations were completed using the programming language R
 with the function integral2 from the \textit{pracma} package.
 The results appear in Figure \ref{Radons} for varied values of
 $\alpha$ and methods of integration.
 
 \begin{figure}[t]
 	\includegraphics[scale = 0.62]{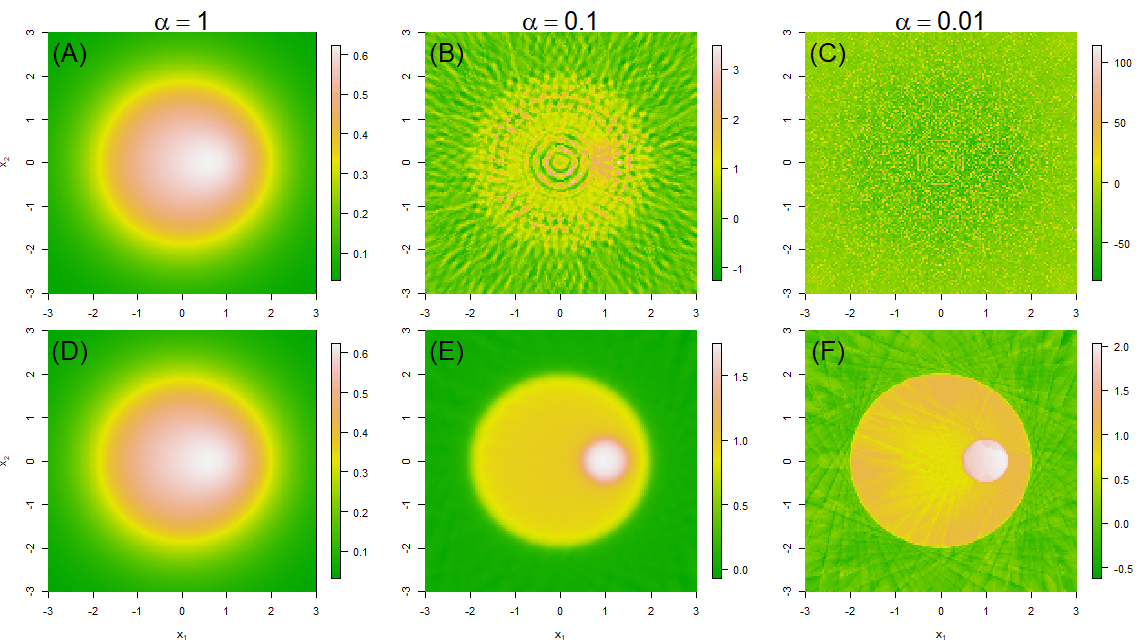}
 	\caption{Inverse Radon Transformation of overlayed discs (5.1) and (5.2). (A) - (C) computed by a single integral, (D) - (F) computed by three integrals as described in Section 4. (A) and (D) are computed with $\alpha = 1$, (B) and (E) are computed with $\alpha = 0.1$, and  (C) and (F) are computed with $\alpha = 0.01$. }\label{Radons}
 \end{figure}

 \par As discussed in Section 4, Figure \ref{Radons} (B) and (C) give evidence that for small values of $\alpha$ and an unmodified numerical integration technique, the resulting inverse transformation yields inaccurate results. However, these results are drastically improved by using the method in (4.1), as evidenced by comparison with Figure \ref{Radons} (E) and (F). Although not depicted here, it is important to note that for much smaller $\alpha$, the result when using the method in (4.1) eventually becomes inaccurate with high variance as well.

 \begin{center}
 	\textbf{6. Conclusion}
 \end{center}

 \par In this paper we have formulated a direct method for approximation of the inversion of the Radon
 transformation: we reconstruct the function from its Radon
 transform without computing the function's Fourier
 transform or solving an auxiliary system of equations. This avoids computing oscillatory
 integrals with high frequency or convolutions with functions with singularities.
 
 We can also note that due to Theorem $1.2$, Gibbs phenomena do not
 occur with the Abel means of piecewise smooth functions. It is
 important to note that in applications, the domain of integration
 is bounded and all functions are also bounded. Since the kernel
 (1.3) is bounded for every $\alpha > 0$, the integral (1.2) is not
 improper and hence can be evaluated with standard methods of
 integration.
 
 Following investigation of related problems may include finding
 fast and efficient methods for minimizing computational error, as
 well as computing the optimal value of $\alpha$ for such methods.

 \bibliographystyle{amsplain}

\end{document}